\DeclareMathOperator{\sgn}{sgn}
\newcommand{\R}{\mathbf{R}}
\newtheorem{theorem}{Theorem}[section]
\newtheorem{exam}{Example}[section]
\newtheorem{lemma}{Lemma}[section]
\newtheorem{cor}{Corollary}[section]
\newtheorem{prop}{Proposition}[section]
\newdefinition{remark}{Remark}[section]
\newproof{proof}{Proof}
\DeclareMathAlphabet{\pazocal}{OMS}{zplm}{m}{n}
\numberwithin{equation}{section}
\begin{document}%

\begin{frontmatter}

\title{The Converse of Sturm's Separation Theorem }
\author{Leila Gholizadeh\footnote[1]{leilagh@math.carleton.ca} and Angelo B. Mingarelli\footnote[2]{angelo@math.carleton.ca}}
\address{School of Mathematics and Statistics, Carleton University, Ottawa, Canada}

\begin{abstract}
We show that Sturm's classical separation theorem on the interlacing of the zeros of linearly independent solutions of real second order two-term ordinary differential equations necessarily fails in the presence of a turning point in the principal part of the equation.  Related results are discussed. 
\end{abstract}

\begin{keyword}
Sturm\sep Separation Theorem\sep Recurrence relations\sep Sturm separation property\sep indefinite principal part \sep indefinite leading term
\MSC[2010] Primary 34B24, 34C10; Secondary 47B50
\end{keyword}
\end{frontmatter}

\section{Introduction}
In the sequel we will always assume, unless otherwise stated that 
\begin{equation}
\label{eq00}\frac{1}{p}, q \in L(I),\quad [a,b] \subset I
\end{equation}
where $I$ is a closed and bounded interval and the functions $p, q : I \to \R$. In this paper there are generally no sign restrictions on the principal part of \eqref{eq01}, i.e., the values, $p(x)$, are generally unrestricted as to their sign and $p(x)$ may even be infinite on sets of positive measure. As usual the symbol $||*||_1$ will denote the $L(I)$ norm.

It is well known \cite{er} that the conditions \eqref{eq00} imply the existence and uniqueness of Carath\'{e}odory solutions of initial value problems associated with \eqref{eq01}, \begin{equation}
\label{eq01}
-(p(x)y^\prime)^\prime + q(x)\,y = 0,\quad\quad x \in [a,b],
\end{equation} that is, solutions $y$ such that both $y$ and $py^\prime$ are absolutely continuous on $[a,b]$ and satisfy
\begin{equation}
\label{eq02}
y(a) = y_a,\quad py^\prime(a) = y_{a^\prime},
\end{equation}
for given $y_a, y_{a^\prime}$. The study of problems with an indefinite leading term (a.k.a. an {\it indefinite principal part}) are few and far between. For example, the failure of Sturm's oscillation theorem in such indefinite cases was observed in [\cite{atm}, p.381] where, in the presence of an indefinite weight function, it may occur that the spectrum is, in fact, the whole complex plane and the eigenfunctions behave in a totally non-Sturmian fashion. The example in question consists in choosing $p(x)=q(x)=\sgn x$ for $x\in [-1,1]$, $y(-1)=y(1)=0$. Then the two solutions $y_1(x) = \sin P(x)$ and $y_2(x)=\cos P(x)$ where $P(x) = |x|-1$ have non interlacing zeros. Indeed, $y_2(x)\neq 0$ on $[-1,1]$ while $y_1(x)$ vanishes at both ends there. This special case is contained in Theorem~\ref{th0} below.  

Recall that, in its simplest most classical form, Sturm's Separation Theorem states that given any non-trivial solution $y$ of \eqref{eq01} having consecutive zeros at $a, b$, $a<b$, where $[a,b]\subset I$ then every other linearly independent solution of \eqref{eq01} must vanish only once in $(a,b)$. An equation \eqref{eq01} is said to have the {\it Sturm Separation Property} (abbr. SSP) on $[a,b]$ if Sturm's Separation Theorem holds for the given equation on the given interval.

The framework described above normally assumes that the principal part, $p$, appearing in \eqref{eq01} is a.e. finite on $[a,b]$. However, still greater generality can be obtained by allowing $p(x)$ to be identically infinite on subintervals. In this case one needs to rewrite \eqref{eq01} as a vector system in two dimensions, e.g., \begin{equation}\label{eq06} u^{\prime} =  \frac{v}{p},\quad  v^{\prime} = q\, u.\end{equation}
This now defines a problem of {\it Atkinson-type} (see [\cite{atk}, Chapter 8], [\cite{jb}, p.558] for more details). The advantage of using this  formulation is that it can be used to study three-term recurrence relations as well, see \cite{atk}, \cite{abm}.  We summarize this approach briefly: We divide $[a,b]$ into a finite union of subintervals 
\begin{equation}
\label{eqpar}
[a,b_o],\, [b_o,a_1],\, [a_1,b_1],\, [b_1,a_2],\,[a_2,b_2],\, ...\, [b_{m-1},a_m],\,[a_m,b].
\end{equation}
on each of which alternately $p(x) = \infty$ or $q(x)=0$ (but $p(x)$ is not infinite when $q(x)=0$). Direct integration of \eqref{eq06} then shows that $y_n = u(a_n)$ satisfies the three-term recurrence relation
\begin{equation}\label{eq07}
c_n\,y_{n+1}+c_{n-1}\,y_{n-1}-d_n\,y_n = 0,
\end{equation}
where 
$$ c_n^{-1} = \int_{b_n}^{a_{n+1}}\frac{ds}{p(s)},\quad d_n = c_n + c_{n+1} + \int_{a_n}^{b_n} q(s)\, ds,$$
or, equivalently, a second order difference equation
\begin{equation}\label{rr}
 - \triangle (c_{n-1}\triangle y_{n-1}) + (\int_{a_n}^{b_n} q(s)\, ds)\, y_n = 0,
\end{equation}
where, as usual, $\triangle$ represents the forward difference operator $\triangle y_n = y_{n+1}-y_n$.

Recall that by a {\it zero} of a solution of \eqref{eq07} is meant the zero of that absolutely continuous polygonal curve with vertices at $(n,y_{n})$. (This interpretation arises directly by integrating \eqref{eq06}.) Thus, zeros of solutions of \eqref{eq07} are said to {\it interlace} if the corresponding polygonal curves have interlacing zeros.

The failure of Sturm's Separation Theorem (or SSP) in the case of recurrence relations (or difference equations) is old but chronicled by both B\^{o}cher \cite{mb} and Moulton, \cite{ejm}, and not independently of one another. (Moulton \cite{ejm} actually cites B\^{o}cher in reference to the question.) B\^{o}cher \cite{mb} goes on to give, as an example, two independent solutions of the Fibonacci sequence recurrence relation,
$$y_{n+1} = y_n + y_{n-1},\quad y_{-1}=0, y_0=1;\quad y_{-1}= -10, y_0=6,$$
with no interlacing features whereas Moulton \cite{ejm} went on to show (at B\^{o}cher's prodding) that \eqref{eq07} has the SSP provided $c_n\,c_{n-1} > 0$ for all $n$. To the best of our knowledge, a converse of Sturm's Separation Theorem has not been addressed. In [\cite{abm}, p.209] we showed, by means of an example, that the SSP may fail in the case where Moulton's condition $c_n\,c_{n-1} > 0$ fails. 


This failure suggests that $p(x)$ must change its sign in the continuous case and that intervals in which violations to SSP occur must be neighborhoods of a ``turning point" of $p$. The existence of such a point is necessitated by the fact that otherwise $p(x)$ would be (a.e.) of one sign in $[a,b]$ and so SSP must hold there by Sturmian arguments. 

Below we present a converse to SSP as a consequence of more general results dealing with \eqref{eq06}. Said result will then apply to both differential and difference equations. 

Specifically, we will prove (Theorem~\ref{th0}) that whenever the leading term $p(x)$ has a turning point in $(a,b)$ then SSP must fail. This is equivalent to showing that if the SSP holds then $p(x)$ cannot have a turning point inside $(a,b)$ and thus $p(x)$ is a.e. of one sign. This is the actual converse of Sturm's Separation Theorem. We illustrate this result by means of explicit examples.

We also present (Theorem~\ref{th3}) an effective necessary condition for the existence of a solution vanishing at the end-points of an interval in the case of sign-indefinite $p$ and $q$. Examples are provided illustrating the various theorems. Other results of independent interest are also presented thus demonstrating the complexities of qualitative behavior of solutions in the case of indefinite leading terms.  

We conclude by a conjecture which gives an upper and lower bound to the difference in the number of zeros in $[a,b]$ between two independent solutions in the case of an arbitrary but finite number of turning points in $p(x)$.

\section{Main results}
We recall that if $p$ is continuous or piecewise continuous on $[a,b]$ then a {\bf turning point} is a point $c\in (a,b)$ around which $p(x)$ changes its sign. If $p$ is merely measurable then $c$ is defined by requiring that, in  some interval containing $c$ in its interior, we have $(x-c)p(x)>0$ a.e. (or $(x-c)p(x)<0$ a.e.) This somewhat restrictive definition implies that the set of turning points of $p$ cannot be everywhere dense in $(a, b)$. Indeed, this definition implies that turning points must be separated from one another.

In the sequel we always assume that solutions of \eqref{eq06} or \eqref{eq06i} below are deemed non-trivial. In addition, we take it that $1/p(x)$ may vanish a.e. on sets of positive measure, but not vanish a.e. on $[a,b]$, and that $p(x)$ is unrestricted as to its sign there.

\begin{lemma}
\label{lem0}
For $i=1,2$, let $u_1,u_2$ be solutions of 
\begin{equation}\label{eq06i} u_i^{\prime} =  \frac{v_i}{p},\quad  v_i^{\prime} = q\, u_i,\end{equation}
where $p, q$ satisfy \eqref{eq00}. Then 
\begin{equation}\label{eq0001}
u_2(x)v_1(x)-u_1(x)v_2(x) = C,
\end{equation}
where $C$ is a constant. 
\end{lemma}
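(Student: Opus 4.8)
The plan is to treat $W(x) := u_2(x) v_1(x) - u_1(x) v_2(x)$ as a Wronskian-type quantity: I will show that $W$ is absolutely continuous on $[a,b]$ and that $W'(x) = 0$ for a.e.\ $x$, and then conclude that $W$ is constant.

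First I would record the regularity available. By the solution concept adopted above, a solution $(u_i,v_i)$ of \eqref{eq06i} consists of functions $u_i$ and $v_i$ that are absolutely continuous on $[a,b]$; in particular each is bounded there. Hence the products $u_2 v_1$ and $u_1 v_2$, and therefore $W$, are absolutely continuous on the bounded interval $[a,b]$, and the product rule for derivatives holds a.e. Differentiating a.e.\ and inserting the equations \eqref{eq06i} gives
\[
W' = u_2' v_1 + u_2 v_1' - u_1' v_2 - u_1 v_2'
   = \frac{v_2}{p}\,v_1 + u_2\,(q u_1) - \frac{v_1}{p}\,v_2 - u_1\,(q u_2)
   = 0 \qquad \text{a.e. on } [a,b].
\]
Here the terms $v_i/p$ are to be understood as the a.e.-defined $L(I)$ functions $u_i'$ furnished by \eqref{eq00}; in particular they vanish a.e.\ on any set where $p$ is infinite, and in all cases the first and third terms cancel pointwise a.e., as do the two $q$-terms.

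Finally, since $W$ is absolutely continuous with $W' = 0$ a.e., the fundamental theorem of calculus for absolutely continuous functions yields $W(x) = W(a) + \int_a^x W'(t)\,dt = W(a)$ for every $x \in [a,b]$, so $W \equiv C$ with $C = u_2(a) v_1(a) - u_1(a) v_2(a)$. The only genuinely delicate point — the main obstacle, such as it is — is the measure-theoretic bookkeeping forced by the Atkinson-type setting: one must apply the product rule only in its a.e.\ form valid for absolutely continuous (not $C^1$) functions, and read the formally cancelling quotients $v_i/p$ as the honest $L(I)$ derivatives $u_i'$ guaranteed by \eqref{eq00}, including on sets where $p$ is infinite. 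Beyond that, the identity reduces to a one-line cancellation.
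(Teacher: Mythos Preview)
Your proof is correct and follows exactly the same approach as the paper: differentiate $W=u_2v_1-u_1v_2$ using \eqref{eq06i} and invoke the absolute continuity of the $u_i,v_i$ (and hence of their products) to conclude that $W$ is constant. The paper's version is simply terser, omitting the explicit computation and the measure-theoretic remarks about the Atkinson-type setting that you spell out.
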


We will assume that, without loss of generality, $C=1$. The main result shows that SSP fails whenever $p$ has a turning point and thus the a.e. positivity (or negativity) of $p(x)$ is a necessary condition for the validity of SSP as well as sufficient, as is well known.

\begin{theorem}
\label{th0}
Let $p(x)$ have a unique turning point at $x=c$, $a < c < b$  and let $u_i$, $i=1,2$ be linearly independent solutions of \eqref{eq06i} such that $u_1(a)=u_1(b)=0$, $u_1(x) \neq 0$ in $(a,b)$. Then either $u_2(x) \neq 0$ on $[a,b]$, or  $u_2(x)$ is of constant sign except only at $x=c$ where $u_2(c)=0$, or finally $u_2(x)$ has exactly two zeros in $(a,b)$. In every case it follows that SSP fails on $[a,b]$.
\end{theorem}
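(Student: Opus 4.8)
The plan is to track the sign of $u_2$ using the constancy of the Wronskian-type quantity $W(x) := u_2(x)v_1(x) - u_1(x)v_2(x) \equiv 1$ from Lemma~\ref{lem0}, together with the monotonicity information forced on $u_1$ by the one-signedness of $p$ on each side of the turning point $c$. First I would record the elementary consequences of $u_1(a)=u_1(b)=0$ and $u_1 \neq 0$ in $(a,b)$: after normalizing we may take $u_1 > 0$ on $(a,b)$. Since $u_1' = v_1/p$ and $p$ has fixed sign on $(a,c)$ and on $(c,b)$, the function $v_1$ cannot vanish on either of those open subintervals except possibly in a way dictated by $u_1$ attaining an interior extremum; in particular $v_1(a) \neq 0$ and $v_1(b)\neq 0$ (else $u_1 \equiv 0$), and one checks that $u_1'$ — hence $v_1/p$ — is positive near $a$ and negative near $b$, so $v_1(a)$ and $v_1(b)$ have signs determined by the sign of $p$ on the adjacent interval. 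Evaluating $W=1$ at $x=a$ gives $u_2(a)v_1(a) = 1$, so $u_2(a) \neq 0$, and likewise $u_2(b)\neq 0$; thus any zeros of $u_2$ lie in the open interval $(a,b)$.

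Next I would examine the possible zeros of $u_2$ in $(a,b)$. Suppose $u_2(\xi)=0$ for some $\xi \in (a,b)$. Then $W(\xi)=1$ forces $-u_1(\xi)v_2(\xi)=1$, so $v_2(\xi) = -1/u_1(\xi) < 0$ is pinned down in sign, and in particular $u_2' = v_2/p$ at $\xi$ has the sign of $-1/(u_1(\xi)p(\xi))$, which flips as $\xi$ crosses $c$. The key structural point is that between two consecutive zeros of $u_2$ the quantity $u_1 v_2$ — equivalently $u_1 u_2' p/u_1 \cdot \dots$ — must return to the value $-1$; combining $u_2' = v_2/p$ with the sign of $p$ being constant on each of $(a,c)$, $(c,b)$, a zero of $u_2$ in $(a,c)$ determines the direction in which $u_2$ crosses, and the same on $(c,b)$. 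A counting argument then shows: $u_2$ can have at most one zero strictly inside $(a,c)$ and at most one strictly inside $(c,b)$ — a second zero in, say, $(a,c)$ would force $u_2'$ to vanish somewhere in $(a,c)$ between them, i.e. $v_2=0$ there, but then $W=1$ at that point reads $u_2 v_1 = 1$ with $u_2$ and $v_1$ of determined sign, contradicting the sign $u_2$ must have just after its first zero. This yields the trichotomy: no zeros of $u_2$ in $(a,b)$; exactly one zero, which must then sit at $x=c$ (a zero in $(a,c)$ alone, or in $(c,b)$ alone, is excluded because the pinned sign of $u_2(a)$ and $u_2(b)$ would be violated on the long side); or exactly two zeros, one on each side of $c$.

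Finally, in each of the three cases SSP fails on $[a,b]$ essentially by inspection: $u_1$ has its consecutive zeros exactly at the endpoints $a,b$, yet the linearly independent companion $u_2$ vanishes $0$, $1$, or $2$ times in $(a,b)$, none of which is the single interior zero demanded by Sturm's Separation Theorem. I expect the main obstacle to be the careful sign bookkeeping in the middle step — specifically, ruling out a second zero of $u_2$ on one side of $c$ and ruling out a lone zero on one side — since this is where the constancy of $W$, the sign of $p$, and the behavior of $u_1'$ near the endpoints must all be combined without slipping on a sign; handling the measurable (as opposed to continuous) case of $p$, where ``$p$ has fixed sign on $(a,c)$'' means $(x-c)p(x)<0$ a.e. and $u_1'$ exists only a.e., will require stating the monotonicity of $u_1$ in integrated form rather than pointwise.
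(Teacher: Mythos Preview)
Your overall strategy---normalize via the Wronskian $W\equiv 1$, pin the signs of $u_2(a)$ and $u_2(b)$ from $W$ evaluated at the endpoints, and analyze zeros of $u_2$ separately on $(a,c)$ and $(c,b)$ using the one-signedness of $p$ there---is exactly the paper's approach. The difference is in how you rule out two zeros of $u_2$ on one side of $c$, and there your argument has a gap.

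You argue: between two zeros $\xi_1<\xi_2$ of $u_2$ in $(a,c)$, a Rolle-type step gives $\eta$ with $v_2(\eta)=0$, whence $u_2(\eta)v_1(\eta)=1$, and then ``$v_1$ of determined sign'' yields a contradiction. But the sign of $v_1$ at an interior point of $(a,c)$ is \emph{not} determined: since $q$ carries no sign hypothesis, $v_1'=qu_1$ can change sign freely and so can $v_1$. What you actually need is already in your hands and avoids $v_1$ altogether: you noted $v_2(\xi)=-1/u_1(\xi)<0$ at every zero $\xi$ of $u_2$ in $(a,b)$. On the side where $p>0$ a.e., integrating $u_2(x)=\int_\xi^x v_2/p$ shows $u_2$ is strictly decreasing through each such zero, so $u_2>0$ immediately to the left and $u_2<0$ immediately to the right; two consecutive zeros in $(a,c)$ would then force an intermediate one, a contradiction. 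The paper does precisely this integration to show $u_2$ must cross (not bounce) at a zero $x_0\in(a,c)$, and then invokes classical Sturm theory on the half-interval $[a,c]$, where $p$ is a.e.\ of one sign, to cap the zero count there at one and to get $u_2(c)<0$. Your endpoint-sign argument and the remaining trichotomy match the paper's (it establishes $u_2(b)>0$ by the contrapositive of your direct computation of $\operatorname{sgn} v_1(b)$), so once the middle step is repaired as above your proof coincides with the paper's.
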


\begin{remark}\label{rem21}
The previous theorem is independent of the sign of $q(x)$ and the number of turning points of $p$ and assumes only that a solution exists vanishing at two points around a given turning point. This is the general case as otherwise the existence of two consecutive zeros in a turning-point-free set would lead to SSP there by classical Sturm theory since $p(x)$ is a.e. of one sign.

The result also includes an analog for the difference equation \eqref{rr} above. Basically, if the $c_n$ change sign at least once, then the solutions, viewed as polygonal curves, have the property stated in the theorem. 

The next example illustrates the result in the continuous case.
\end{remark}
\begin{exam}\label{exam21}
Let $I=[0,\pi]$, and consider the differential equation 
\begin{equation*}
u^{\prime} =  \cos(x)\, v,\quad  v^{\prime} = -  \cos(x)\, u.
\end{equation*}
with a unique turning point at $c=\pi/2$. Then the general solution is 
$$y(x) = c_1 \sin (\sin x)  + c_2 \cos (\sin(x)),$$
where $c_1, c_2$ are constants. First, note that solution $u_1(x) = \sin(\sin x)$ satisfies the conditions of Theorem~\ref{th0}. We now  exhibit solutions of the type guaranteed by said theorem.
\begin{itemize}
\item The solution $u_2(x) = \cos(\sin x)$ has no zeros in $[0,\pi]$.  
\item The solution $u_2(x) = - \cos 1\, \sin(\sin x) + \sin 1\,\cos(\sin (x)) \geq 0$ on $[0, \pi]$ and it has exactly one zero at the turning point $x=\pi/2$ bouncing positively there.
\item The solution $u_2(x) = \cos(\sin x) - \sin(\sin x)$ has exactly two zeros, in conformity with said theorem. 
\item Every solution of this equation has at most two zeros. 
\end{itemize}
The latter result is most readily proved by contradiction. Assuming three such zeros $x_i, i=1,2,3$, $x_i \in [0, \pi]$, we can easily deduce that the three quantities $\tan(\sin(x_i))$ have a common value (i.e., independent of $i$) and this is impossible on $[0,\pi]$.
As a result, SSP fails for this equation.
\end{exam}
Next we consider the problem of finding necessary and sufficient  conditions for the existence of two zeros of \eqref{eq06} on $[a,b]$, i.e., in particular, we are asking for conditions under which  this equation not disconjugate. For the notion of disconjugacy we refer the reader to \cite{jb}, \cite{ph}. 
\begin{theorem}
\label{c21}
The equation \eqref{eq06} with $q(x)=0$ a.e. on $[a,b]$ has a non-trivial solution satisfying $u(a) = u(b)=0$ if and only if \begin{equation}\label{eq05}
 \int_a^b \frac{ds}{p(s)} = 0,
\end{equation}
\end{theorem}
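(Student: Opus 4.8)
The plan is to exploit the special structure of \eqref{eq06} that arises when $q=0$ a.e. First I would note that the second equation becomes $v'=q\,u=0$ a.e. on $[a,b]$; since $v$ is absolutely continuous this forces $v$ to be constant on $[a,b]$, say $v(x)\equiv\gamma$. Substituting into the first equation gives $u'(x)=\gamma/p(x)$ a.e., and integration — legitimate because $1/p\in L(I)$ by \eqref{eq00} — yields
\[
u(x)=u(a)+\gamma\int_a^x\frac{ds}{p(s)},\qquad x\in[a,b].
\]
So every solution corresponding to $q=0$ a.e. has this explicit form, parametrised by the pair $(u(a),\gamma)$.

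For the forward implication, suppose such a solution exists with $u(a)=u(b)=0$. Evaluating the displayed identity at $x=b$ gives $\gamma\int_a^b ds/p(s)=0$. It then remains only to exclude $\gamma=0$: if $\gamma=0$ then $v\equiv 0$ and, since $u(a)=0$, also $u\equiv 0$, so the solution $(u,v)$ would be trivial, contrary to hypothesis. Hence $\gamma\neq 0$ and therefore $\int_a^b ds/p(s)=0$, which is \eqref{eq05}. Conversely, given \eqref{eq05}, I would simply exhibit the solution directly: set $v(x)\equiv 1$ and $u(x)=\int_a^x ds/p(s)$. This pair solves \eqref{eq06} with $q=0$, is non-trivial because $v\not\equiv 0$, and satisfies $u(a)=0$ as well as $u(b)=\int_a^b ds/p(s)=0$ by \eqref{eq05}.

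There is no genuinely hard step here; the one place that calls for care is the treatment of non-triviality. One has to remember that, in the Atkinson-type formulation, ``non-trivial'' refers to the vector $(u,v)$, not to $u$ alone — indeed $u$ may vanish on subintervals where $1/p=0$ while $v\neq 0$ — so the degenerate case $\gamma=0$ must be ruled out explicitly, since it is precisely the case in which the stated equivalence could otherwise break down.
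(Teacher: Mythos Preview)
Your argument is correct and is exactly the natural direct-integration approach: with $q=0$ a.e.\ one has $v\equiv\gamma$ and $u(x)=u(a)+\gamma P(x)$, from which the equivalence is immediate once the degenerate case $\gamma=0$ is excluded via non-triviality. The paper in fact does not spell out a proof of this theorem at all (it proceeds directly to the corollary for recurrence relations), so your write-up actually fills a gap; there is nothing to compare against beyond noting that your reasoning is the obvious one the authors evidently had in mind.
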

\begin{cor}
\label{c22}
Let $c_n$ satisfy 
\begin{equation}\label{eq08}\sum_{n=0}^{m-1} c_n^{-1} = 0.
\end{equation}
Then SSP fails for three-term recurrence relations of the form 
\begin{equation}\label{eq09}
c_n\,y_{n+1}+c_{n-1}\,y_{n-1}-(c_n + c_{n-1})\,y_n = 0.
\end{equation}
\end{cor}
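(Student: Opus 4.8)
The plan is to deduce Corollary~\ref{c22} from Theorem~\ref{c21} together with Theorem~\ref{th0} in its difference--equation form (Remark~\ref{rem21}), by viewing \eqref{eq09} as the difference equation \eqref{rr} attached to an Atkinson--type problem \eqref{eq06} with $q\equiv 0$. Concretely, I would first reverse the construction recalled after \eqref{eqpar}: given the finitely many numbers $c_0,\dots,c_{m-1}$, which we take finite and non--zero since otherwise the recurrence degenerates, choose a partition \eqref{eqpar} of an interval $[a,b]$ and a function $p$ that is identically $\infty$ on the intermediate subintervals $[b_n,a_{n+1}]$ with $\int_{b_n}^{a_{n+1}} ds/p(s) = c_n^{-1}$ there, together with $q\equiv 0$. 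Integrating \eqref{eq06} across the partition then shows that $y_n := u(a_n)$ obeys \eqref{rr} with every $q$--integral equal to zero, i.e.\ exactly \eqref{eq09}; and under this correspondence the zeros of the polygonal curve through the points $(n,y_n)$ are precisely the zeros of $u$ on $[a,b]$.

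Secondly, I would note that hypothesis \eqref{eq08} is exactly condition \eqref{eq05} for this $p$: the integral $\int_a^b ds/p(s)$ splits over the partition, the pieces on which $p=\infty$ contribute nothing, and what is left is $\sum_{n=0}^{m-1} c_n^{-1}$. Hence Theorem~\ref{c21} supplies a non--trivial solution $u_1$ of \eqref{eq06} with $u_1(a)=u_1(b)=0$; equivalently, \eqref{eq09} has a solution with $y_0=y_m=0$. (This can also be seen directly: with $q\equiv 0$, \eqref{rr} reads $\triangle(c_{n-1}\triangle y_{n-1})=0$, so $y_n = y_0 + K\sum_{j=0}^{n-1} c_j^{-1}$, and the conditions $y_0=y_m=0$ admit a non--trivial solution iff $\sum_{n=0}^{m-1} c_n^{-1}=0$.) Moreover \eqref{eq08}, a vanishing sum of finitely many non--zero reals, forces the $c_n^{-1}$ to take both signs, so $p$ changes sign on $[a,b]$ and $[a,b]$ contains a turning point of $p$; restricting, if necessary, to a subinterval bounded by two consecutive zeros of $u_1$, we may assume it contains exactly one, which places us in the hypotheses of Theorem~\ref{th0}.

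Finally, Theorem~\ref{th0} --- whose conclusion that SSP fails carries over verbatim to the difference equation \eqref{rr} by Remark~\ref{rem21} --- gives the failure of Sturm's Separation Theorem for \eqref{eq09}, as claimed. The only real friction I anticipate is confined to the first two steps: reconciling the index conventions between \eqref{eq07} and \eqref{rr}, checking that the prescribed $(c_n)$ really can be realized by an honest $p$ of the required form, and justifying the reduction to a single turning point bracketed by consecutive zeros when the $c_n^{-1}$ change sign more than once. Once those routine points are dealt with, the corollary is a direct transcription of Theorems~\ref{c21} and~\ref{th0}.
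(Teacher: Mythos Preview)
Your overall route---realize \eqref{eq09} as the Atkinson--type system \eqref{eq06} with $q\equiv 0$, identify \eqref{eq08} with \eqref{eq05}, and invoke Theorem~\ref{c21}---is exactly what the paper's one--line proof (``This follows from the discussion leading to the recurrence relations'') has in mind. One slip in your write--up: in the paper's partition it is on the intervals $[a_n,b_n]$ that $p=\infty$, while on $[b_n,a_{n+1}]$ one has $q=0$ and $p$ finite with $c_n^{-1}=\int_{b_n}^{a_{n+1}}ds/p(s)$; you have the roles reversed (your sentence makes $p\equiv\infty$ on the very interval whose $1/p$--integral is supposed to equal $c_n^{-1}$). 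Since you take $q\equiv 0$ globally this is harmless once corrected.

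The real issue is your final step through Theorem~\ref{th0}. Between two \emph{consecutive} zeros of $u_1=P$ the integrand $1/p$ must change sign at least once, but nothing prevents several sign changes there, and you cannot shrink the interval further without losing the endpoint zeros that Theorem~\ref{th0} requires. So the ``reduction to a single turning point'' you flag as routine is in fact not available in general. The clean fix---and what makes the paper's proof a one--liner---is that when $q\equiv 0$ the constant sequence $y_n\equiv 1$ (equivalently $u_2\equiv 1$ in \eqref{eq06}) is always a solution of \eqref{eq09} with no zeros whatsoever; paired with the solution $y_n=\sum_{j=0}^{n-1}c_j^{-1}$ vanishing at $n=0$ and $n=m$ furnished by \eqref{eq08}, this exhibits the failure of SSP directly, with no appeal to Theorem~\ref{th0} and no turning--point bookkeeping.
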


\begin{exam} Let $c_{n-1} = (-1)^{n}$, $n=0,\ldots,m$, where $m$ is even. Then \eqref{eq09} reduces to $y_{n+1} = y_{n-1}$. This has two linearly independent solutions defined by the initial conditions, $y_{-1}=0$, $y_0=1$ and $y_{-1}=1$, $y_0=2$ the former of which has numerous zeros while the second has none. We can see that SSP fails both by direct computation and by Corollary~\ref{c22}.

On the other hand, the same initial conditions $y_{-1}=0$, $y_0=1$ and $y_{-1}=1$, $y_0=2$ for the slightly modified  recurrence relation $y_{n+1} =  - y_{n-1}$ gives two solutions satisfying SSP by Moulton's theorem, \cite{ejm}.
\end{exam}
The separation property for the zeros of the {\it quasi-derivatives} of solutions, i.e., terms of the form $(py^\prime)(x)$, is next. Although the result is simply proved we have been unable to find a reference to it and so present it here for the sake of completeness.
\begin{prop}
\label{th2}
For $p, q$ as in \eqref{eq00}, let $p(x)$ be sign indefinite. In addition, let $q(x)$ be a.e. of one sign on $[a,b]$ and let $y$ be a non-trivial solution of  \eqref{eq01} satisfying 
\begin{equation}
\label{eq11}
(py^\prime)(a) = 0 = (py^\prime)(b).
\end{equation} Then for any linearly independent solution $y_1$ of \eqref{eq01} there is exactly one point $c \in (a,b)$  such that \\ $(py_1^\prime)(c) = 0$.
\end{prop}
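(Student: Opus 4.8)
The plan is to reduce the assertion to the classical Sturm Separation Theorem by passing to the \emph{reciprocal equation} satisfied by the quasi-derivative. Write $v:=py'$ and $v_1:=py_1'$. Since $y$ solves \eqref{eq01}, the pair $(y,v)$ solves the system \eqref{eq06}, that is $v'=qy$ and $y'=v/p$. Because $q$ is a.e. of one sign it is nonzero a.e., so $y=v'/q$ a.e., and substituting this into $y'=v/p$ yields the reciprocal equation
$$-\Bigl(\tfrac1q\,v'\Bigr)'+\tfrac1p\,v=0\qquad\text{on }[a,b],$$
equivalently the system $v'=q\,z,\ \ z'=\tfrac1p\,v$ with $z=y$. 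This is again an equation/system of the type studied here, with the roles of $p$ and $q$ interchanged: its leading coefficient is $1/q$ and its potential is $1/p$. Since $q$ and $1/p$ both lie in $L(I)$ by \eqref{eq00}, $v$ and $z=y$ are Carath\'eodory solutions; and, decisively, the new leading coefficient $1/q$ is now a.e. of \emph{one sign} (and a.e. finite), precisely because $q$ is a.e. of one sign. Hence the reciprocal equation has a definite principal part and therefore possesses the Sturm Separation Property, by the classical sufficient condition for SSP recalled before Theorem~\ref{th0}.

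Next I would record that $v$ and $v_1$ are non-trivial, linearly independent solutions of the reciprocal equation. Non-triviality of $v=py'$ follows since $v\equiv0$ would give $y'=v/p=0$ a.e., so $y$ is constant, whence $qy=v'\equiv0$ forces $y\equiv0$ (as $q$ is not a.e. zero), contradicting the non-triviality of $y$; the same reasoning applies to $v_1$. Linear independence comes from Lemma~\ref{lem0}: applied to the original pair $u_1=y,\ u_2=y_1$ it gives $y_1v-yv_1\equiv C$, a nonzero constant; and this quantity is, up to sign, precisely the Wronskian-type invariant of Lemma~\ref{lem0} for the reciprocal pair $(v,z)$, $(v_1,z_1)$, so $v_1$ is not a scalar multiple of $v$. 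Finally $v(a)=v(b)=0$ by hypothesis, so, with $a$ and $b$ read as consecutive zeros of $v=py'$ (that is, $py'$ not vanishing in $(a,b)$, which is the situation described by the statement), the Sturm Separation Property for the reciprocal equation applied to the independent solutions $v$ and $v_1$ produces exactly one $c\in(a,b)$ with $v_1(c)=(py_1')(c)=0$, which is the asserted conclusion.

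The substantive work is not the final invocation of Sturm but the legitimacy of the reduction inside the Carath\'eodory/Atkinson framework: one must justify $y=v'/q$ a.e. and the resulting equation for $v$ using only $q\neq0$ a.e. together with $1/p,q\in L(I)$ (the system form $v'=qz,\ z'=\tfrac1p v$ is convenient here, since it needs no integrability of $1/q$), and one must confirm that the sign hypothesis on $q$ is genuinely inherited by the new leading coefficient $1/q$, so that the classical Pr\"ufer/Picone proof of Sturm separation applies verbatim to the reciprocal equation. I expect this inheritance step to be the only point requiring a little care; it is exactly what turns the indefinite-principal-part obstruction that pervades the rest of the paper into a classical disconjugacy-type situation. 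One small caveat worth stating explicitly: between any two \emph{consecutive} zeros of $py'$ the quasi-derivative $py_1'$ vanishes exactly once, so the count ``exactly one in $(a,b)$'' presumes that $a$ and $b$ form such a consecutive pair for $py'$; if preferred, the non-vanishing of $py'$ on $(a,b)$ can be added to the hypotheses.
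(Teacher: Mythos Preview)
Your proof is correct and follows essentially the same route as the paper: apply the reciprocal transformation $v=py'$ so that $v$ satisfies $-(\tfrac1q\,v')'+\tfrac1p\,v=0$, observe that the new leading coefficient $1/q$ is a.e.\ of one sign, and invoke the classical Sturm Separation Property for $v$ and $v_1$. Your version is somewhat more detailed (the explicit checks of non-triviality and linear independence, the system formulation avoiding integrability of $1/q$, and the caveat about consecutive zeros), but the underlying idea is identical.
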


\begin{remark}\label{rem23}
This proposition seems to be the closest that one can get to a SSP-type result for positive $q$. In other words, as we have seen earlier, the SSP fails even if $q(x)>0$ on $[a,b]$, and $p(x)$ is sign indefinite (i.e., has a turning point in $(a,b)$).  
\end{remark}

Next, we give a necessary condition for the existence of a solution vanishing at the end-points of a typical interval, $[a,b]$, and positive in its interior in the presence of an indefinite principal part or {\it leading term}, $p(x)$, in \eqref{eq06}.
\begin{theorem}
\label{th3}
Let $||q||_1>0$ and let \eqref{eq05} hold. Let $u$ be a solution of \eqref{eq06} such that $u(a)=u(b)=0$, and $u(x) > 0$ for $x \in (a,b)$. Then, writing,
\begin{equation}
\label{eq12x}
P(x) := \int_a^x \frac{ds}{p(s)},
\end{equation}
either $P(x)q(x)=0 \quad {\text{a.e. on $(a,b)$}}$ or there is a set of positive measure on which $P(x)q(x) > 0$ a.e. in $(a,b)$ and  a set of positive measure on which $P(x)q(x) < 0$ a.e. in $(a,b)$ (i.e., $Pq$ changes its ``sign" on $(a,b)$.)

\end{theorem}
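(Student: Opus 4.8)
The plan is to distill a single integral identity from the two endpoint conditions on $u$ and the hypothesis \eqref{eq05}, and then to read the sign dichotomy straight off that identity. First I would note that the function $P$ in \eqref{eq12x} is absolutely continuous on $[a,b]$ with $P'=1/p$ a.e., that $P(a)=0$, and that \eqref{eq05} says precisely $P(b)=0$. Writing the system \eqref{eq06} as $u'=v/p$, $v'=qu$, I would integrate by parts:
$$\int_a^b P(x)\,q(x)\,u(x)\,dx=\int_a^b P(x)\,v'(x)\,dx=\bigl[P(x)v(x)\bigr]_a^b-\int_a^b \frac{v(x)}{p(x)}\,dx=\bigl[Pv\bigr]_a^b-\bigl[u\bigr]_a^b .$$
The manipulation is legitimate in the Carath\'eodory/Atkinson setting because $u$ is continuous and $q\in L(I)$, so $v$ is absolutely continuous; $Pu$ is bounded and $q\in L(I)$, so the integrand lies in $L([a,b])$; and $\int_a^b v/p = \int_a^b u' = u(b)-u(a)$. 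Since $P(a)=P(b)=0$ and $u(a)=u(b)=0$, both bracketed terms vanish, which gives
$$\int_a^b P(x)\,q(x)\,u(x)\,dx=0 .$$

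Next I would argue by contradiction. Suppose the second alternative of the statement fails and $Pq$ is not a.e.\ zero on $(a,b)$; then one of the sets $\{x\in(a,b):P(x)q(x)>0\}$, $\{x\in(a,b):P(x)q(x)<0\}$ has measure zero while the other has positive measure, so, without loss of generality, $P(x)q(x)\ge 0$ a.e.\ on $(a,b)$ with strict inequality on a set $E\subset(a,b)$ of positive measure. Because $u(x)>0$ throughout $(a,b)$, the integrand $Pqu$ is $\ge 0$ a.e.\ on $(a,b)$ and is $>0$ a.e.\ on $E$, forcing $\int_a^b Pqu\,dx>0$ and contradicting the identity above; the case $Pq\le 0$ a.e.\ is symmetric. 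Hence either $Pq=0$ a.e.\ on $(a,b)$ or $Pq$ takes both strict signs on sets of positive measure, as claimed. (The assumption $\|q\|_1>0$ only serves to make the dichotomy non-vacuous, since when $q$ vanishes a.e.\ the first alternative holds trivially.)

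I do not anticipate a substantive obstacle here: the proof rests entirely on the identity $\int_a^b Pqu\,dx=0$, which is produced solely by the two endpoint conditions on $u$ together with $P(a)=P(b)=0$. The only points requiring care are the regularity bookkeeping that justifies the integration by parts and the fundamental theorem of calculus for absolutely continuous functions in this setting, and the correct parsing of ``$Pq$ changes its sign'' as ``both $\{Pq>0\}$ and $\{Pq<0\}$ have positive measure,'' whose negation, combined with ``not a.e.\ zero,'' is exactly ``$Pq$ has a single strict sign a.e., attained on a set of positive measure'' — the hypothesis that drives the contradiction.
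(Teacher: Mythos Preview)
Your proposal is correct and takes essentially the same route as the paper: both derive the key identity $\int_a^b P(t)q(t)u(t)\,dt=0$ and then read the sign dichotomy from the strict positivity of $u$ on $(a,b)$. The paper reaches that identity by first writing $u(x)=M P(x)+\int_a^x p(s)^{-1}\!\int_a^s q\,u\,dt\,ds$ and integrating the inner integral by parts before evaluating at $x=b$, whereas you obtain it in one step via $\int_a^b P\,v'=[Pv]_a^b-\int_a^b v/p$; the substance is identical.
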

The next result is of independent interest, Example~\ref{exam21} being a special case.
\begin{lemma}
\label{lem2}
Let $I=[a,b]$, $\lambda > 0$. The general solution of either 
\begin{equation*}
(py^\prime)^\prime + \frac{\lambda}{p}y=0, \quad\quad {\text or}\quad\quad  u^{\prime} =  \frac{v}{p},\quad  v^{\prime} = - \frac{\lambda}{p}\, u.
\end{equation*}
is given by 
$$y(x) = u(x) = c_1 \cos \left (\sqrt{\lambda}P(x)\right ) + c_2 \sin \left (\sqrt{\lambda}P(x)\right ),$$ where
$$ v(x) =   - c_1 \sqrt{\lambda}\, \sin \left (\sqrt{\lambda}P(x)\right ) + c_2 \sqrt{\lambda}\, \cos \left (\sqrt{\lambda}P(x)\right ),$$
where $c_1, c_2$ are constants.
\end{lemma}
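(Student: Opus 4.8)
The plan is to verify directly that the proposed pair solves the system \eqref{eq06i} with $q=-\lambda/p$ and that the two natural choices of constants give linearly independent solutions; uniqueness of Carath\'{e}odory solutions then forces the solution space to be exactly this two-parameter family. The one genuinely load-bearing observation is a regularity one, and it is the chain rule for the composition of a smooth function with an absolutely continuous function — in particular one does \emph{not} need $P$ to be monotone or invertible, so the possible vanishing of $1/p$ on sets of positive measure is harmless.

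First I would record that, since $1/p\in L(I)$ by \eqref{eq00}, the function $P(x)=\int_a^x ds/p(s)$ of \eqref{eq12x} is absolutely continuous on $[a,b]$ with $P'(x)=1/p(x)$ a.e. As $t\mapsto\cos(\sqrt\lambda\,t)$ and $t\mapsto\sin(\sqrt\lambda\,t)$ are $C^\infty$, the compositions $w_1:=\cos(\sqrt\lambda P)$ and $w_2:=\sin(\sqrt\lambda P)$ are absolutely continuous, and the chain rule gives a.e.
$$ w_1' = -\sqrt\lambda\,\sin(\sqrt\lambda P)\cdot\frac1p, \qquad w_2' = \sqrt\lambda\,\cos(\sqrt\lambda P)\cdot\frac1p. $$
Setting $z_1:=p\,w_1'=-\sqrt\lambda\sin(\sqrt\lambda P)$ and $z_2:=p\,w_2'=\sqrt\lambda\cos(\sqrt\lambda P)$ — again absolutely continuous, as $C^\infty$ functions of $P$ — I would differentiate once more to get
$$ z_1' = -\lambda\cos(\sqrt\lambda P)\cdot\frac1p = -\frac{\lambda}{p}\,w_1, \qquad z_2' = -\lambda\sin(\sqrt\lambda P)\cdot\frac1p = -\frac{\lambda}{p}\,w_2. $$
Thus $(w_1,z_1)$ and $(w_2,z_2)$ are Carath\'{e}odory solutions of $u'=v/p,\ v'=-(\lambda/p)u$, equivalently $w_1,w_2$ solve $(py')'+(\lambda/p)y=0$ with quasi-derivatives $py'=z_1,z_2$.

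Next I would check independence: by Lemma~\ref{lem0} the quantity $w_2z_1-w_1z_2$ is constant, and evaluating it yields
$$ w_2z_1-w_1z_2 = -\sqrt\lambda\,\bigl(\sin^2(\sqrt\lambda P)+\cos^2(\sqrt\lambda P)\bigr) = -\sqrt\lambda \neq 0, $$
so $(w_1,z_1)$ and $(w_2,z_2)$ are linearly independent. Finally, by the existence and uniqueness of Carath\'{e}odory solutions of initial value problems for \eqref{eq06i} under \eqref{eq00} (see \cite{er}), the solution space of the system is two-dimensional; since it contains these two independent solutions, every solution has the form $\bigl(c_1w_1+c_2w_2,\ c_1z_1+c_2z_2\bigr)$, which is precisely the asserted formula — note $P(a)=0$, so these are the solutions with $u(a)=c_1$ and $(py')(a)=c_2\sqrt\lambda$. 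The scalar second-order statement is identical, reading $v=py'$. I expect no real obstacle here beyond being careful to cite the a.e.\ chain rule and the Carath\'{e}odory existence-uniqueness theorem; the rest is a short computation.
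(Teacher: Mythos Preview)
Your proposal is correct and is precisely the direct verification the paper has in mind; the paper itself simply writes ``This is a direct calculation and so the proof is omitted.'' Your version just spells out the chain-rule and independence details explicitly, which is fine.
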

\begin{remark}
It is well known and easy to derive that in the case where the leading term $p(x)$ is a.e. positive (or negative) then the absolute value of the difference of the number of zeros of two independent solutions is equal to 1, due to the interlacing property of such zeros. In the case of an indefinite leading term we make the following conjecture.

\section{Conjecture}

Let $p(x)$ have at least one turning point in $(a,b)$ and let $y$ be a solution satisfying $y(a)=y(b)=0$ having $n$ zeros in $[a,b]$. Then, given any integer $k$, $0 \leq k \leq n$, there are examples for which the absolute value of the difference of the number of zeros of two independent solutions on $[a,b]$ is equal to $k$. 

This totally non-Sturmian behavior appears to be typical in cases where the principal part changes sign.
\end{remark}

\section{Proofs}

\begin{proof} (Lemma~\ref{lem0}) The proof is by differentiation of the expression on the left of \eqref{eq0001} making use of \eqref{eq06i}. Note that all $u_i, v_i$, and so their products, are absolutely continuous on the interval under consideration.
\end{proof}

\begin{proof} (Theorem~\ref{th0}) There are only two logical possibilities. Either $u_2(x) \neq 0$  in $[a,b]$ or $u_2(x) = 0$ at $x=x_0$ in $(a,b]$. Clearly $u_2(a) \neq 0$ as its negation would violate \eqref{eq0001}. For the sake of simplicity we may assume that $u_2(a)>0$ (or else we may replace $u_2$ by $-u_2$ in the ensuing discussion along with other minor changes). 

In addition, we may assume, without loss of generality, that this first zero is at, say $x_0 \in (a,c)$, that is, to the left of the turning point. A similar argument applies in the event that this zero is in $(c,b]$. Thus, $u_2(x) \geq 0$ for $x \in [a,x_0)$.

Next, we show that, unless $x_0=c$ (see below), $u_2(x)$ cannot ``bounce" off $x=x_0$ and remain positive for some $x>x_0$. To see this observe that  \eqref{eq0001} implies that $v_2(x_0)<0$. The continuity of $v_2$ now implies the existence of a $\delta > 0$ and a neighborhood $J = (x_0-\delta, x_0+\delta) \in (a,c)$ in which $v_2(x)<0$. It follows that, for $x\in (x_0, x_0+\delta)$,
$$u_2(x)  = \int_{x_0}^{x} \frac{v_2(s)}{p(s)}\, ds.$$
Since $p(x) > 0$ a.e. in $J$ and $v_2(x) <0$ there as well, we see that $u_2(x) < 0$ to the right of $x_0$ and thus $u_2$ must cross the axis whenever it is zero. Summarizing, we have shown that there exists a $\delta >0$ such that $u_2(x) >0$ on $[a,x_0)$ and $u_2(x) < 0$ on $(x_0, x_0+\delta)$. Now, since $p(x) >0$ a.e. in $[a, c]$, by ordinary Sturm theory we get that it is impossible for $u_2(x)=0$ again in $(x_0+\delta, c]$. This is because SSP applies on intervals in which $p(x)$ is a.e. of one sign, and so $u_2(x)$ can have at most one zero there. It follows that $u_2(c)<0$.

As before we know that \eqref{eq0001} forces $u_2(b) \neq 0$. We show that $u_2(b)>0$. Assume the contrary, i.e., $u_2(b) < 0$. Since $p(x) < 0$ a.e. on $(c,b)$ we have from \eqref{eq0001} that $u_2(b)v_1(b)=1$ and so that $v_1(b)<0$. A continuity argument again implies the existence of a $\eta > 0$ such that  $v_1(x)<0$ for $x \in (b-\eta, b)$. For such $x$,
$$u_1(b) - u_1(x)  = -u_1(x) = \int_{x}^{b} \frac{v_1(s)}{p(s)}\, ds.$$
However, $p(x)<0$ a.e. in $(b-\eta, b)$. Hence $u_1(x) <0$ in $(b-\eta, b)$ and this contradicts the fact that $u_1(x)>0$ on $(a,b)$. Hence $u_2(b)\geq 0$. As before, the case $u_2(b)= 0$ being excluded by \eqref{eq0001}, we find that $u_2(b) > 0$.  Since $u_2$ is continuous and $u_2(c)<0$ there must exist another zero $x_1 \in (c,b)$. This zero must be unique by Sturm theory since $p(x)$ is a.e. of one sign on $(c,b)$, i.e., SSP applies here. 

Finally, let's consider the case where $x_0=c$, that is, the first zero of $u_2$ occurs at the turning point itself. This case may occur and  a bounce is possible here. The reason for this is that previous argument fails on account that $p(x)$ a.e. changes its sign on every interval of the form $(c-\delta, c+\delta)$, by definition. Since $p(x) < 0$ a.e. on $(c, c+\delta)$, and arguing as above, we get that for all $x \in (c, c+\delta)$ and $\delta$ sufficiently small, 
$$u_2(x)  = \int_{c}^{x} \frac{v_2(s)}{p(s)}\, ds > 0.$$
Thus, a bounce may occur there. Finally, $u_2(x)$ may not vanish again in $(c, b)$ since $p(x)$ is a.e. of one sign and so can only have at most one zero in $[c,b]$ by Sturm theory. This completes the proof. 
\end{proof}


\begin{proof}(Corollary~\ref{c22}) This follows from the discussion leading to the recurrence relations.
\end{proof}

\begin{proof}(Proposition~\ref{th2})
We use the so-called {\it reciprocal transformation} \cite{jb}: Let $z=py^{\prime}$ where $y$ satisfies \eqref{eq01}. Then $z$ satisfies the equation
$$-(\frac{1}{q}\,z^\prime)^\prime + \frac{1}{p}\,z = 0, $$
and 
$$z(a) = z(b) = 0.$$
Since $q$ is a.e. of one sign, classical Sturmian results apply so that the previous equation has the SSP on said interval. Thus, for any other linearly independent solution $z_1(x)$ there is a unique $c\in (a,b)$ such that $z_1(c)=0$. In particular, if we define a solution $y_1$ via $z_1=py_1^\prime$, then $z_1(c)=0$ for some $c$, and the result follows.
\end{proof}

\begin{proof}(Theorem~\ref{th3}). 
Without loss of generality we can assume that $u(a)=0$, $v(a)=M$ where $M\neq 0$ is arbitrary but fixed. Then
\begin{eqnarray*}
u(x) &=& M\int_a^x \frac{ds}{p(s)} + \int_a^x \frac{1}{p(s)} \int_a^s q(t)u(t)\,dt\,ds\\
&=& M\int_a^x  \frac{ds}{p(s)} + P(x)\int_a^x q(t)u(t)\, dt - \int_a^x  P(t) q(t)u(t)\,dt.
\end{eqnarray*} 
Since $u(b)=0$ and $P(b)=0$, it follows that $$ \int_a^b  P(t) q(t)u(t)\,dt = 0,$$
and, since $u(x) > 0$ in $(a,b)$, the result follows.
\end{proof}

\begin{proof} (Lemma~\ref{lem2})
This is a direct calculation and so the proof is omitted. 
\end{proof}

\section{Note added in proof, Sept. 14, 2021}

This section is new and does not appear in the paper \cite{gm} on which all the previous sections are based.

In the previous discussion we showed that whenever the leading term $p(x)$ has a unique turning point in $(a,b)$ then SSP must fail. This is equivalent to showing that if the SSP holds then $p(x)$ cannot have a turning point inside $(a,b)$ and thus $p(x)$ is a.e. of one sign. This is the actual converse of Sturm's Separation Theorem. In this brief note we show that said result is true in the case where $p$ is piecewise continuous on $[a,b]$ (and so has a finite number of turning points).

In the sequel all solutions of \eqref{eq06} are assumed to be non-trivial unless otherwise specified. The first lemma is of importance as it shows that zeros of solutions, $u$, of \eqref{eq06} cannot accumulate inside $(a,b)$.

\begin{lemma}\label{lem1x}
The zeros of any solution of \eqref{eq06} cannot accumulate on $(a.b)$.  
\end{lemma}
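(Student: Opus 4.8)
The plan is to argue by contradiction, working locally near a hypothetical accumulation point. Suppose $u$ is a nontrivial solution of \eqref{eq06} and that its zeros accumulate at some point $x_\ast \in (a,b)$; let $(x_k)$ be a sequence of distinct zeros with $x_k \to x_\ast$. Since $p$ is piecewise continuous, $x_\ast$ lies in the interior of one of the finitely many subintervals on which $p$ is continuous (or $x_\ast$ is one of the finitely many breakpoints, a case we handle by restricting to a one-sided neighborhood). On a small closed interval $J$ around $x_\ast$ (or a half-neighborhood), $p$ is continuous and, crucially, of one fixed sign there if $x_\ast$ is not a turning point — and even if it is, on each of the two half-neighborhoods $p$ has a fixed sign. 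So after possibly passing to a subsequence we may assume all $x_k$ lie in a closed interval $J$ on which $p$ is continuous, bounded, and of one sign, say $p(x) \geq p_0 > 0$ on $J$ (the negative case is symmetric, via $x \mapsto -x$ or sign changes).

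The key step is then the classical Sturmian argument that $1/p \in L^\infty(J)$ with $p$ of one sign forbids accumulation of zeros. I would invoke Lemma~\ref{lem0}: with $u_1 = u$ and $u_2$ any linearly independent solution, $u_2 v_1 - u_1 v_2 = C \neq 0$. At a zero $x_k$ of $u=u_1$ we get $u_2(x_k) v_1(x_k) = C$, so $v_1(x_k) \neq 0$, and in fact $|v_1(x_k)| \geq |C|/\|u_2\|_{\infty,J} > 0$, a bound uniform in $k$. Between two consecutive zeros $x_k < x_{k+1}$ we have $u_1(x_{k+1}) - u_1(x_k) = 0 = \int_{x_k}^{x_{k+1}} v_1(s)/p(s)\,ds$, forcing $v_1$ to change sign on $(x_k, x_{k+1})$, hence $v_1$ has a zero $\xi_k \in (x_k, x_{k+1})$. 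But the $\xi_k$ also accumulate at $x_\ast$, and $v_1' = q\, u_1$ with $q \in L(J)$, so $v_1$ is absolutely continuous; then $v_1(\xi_k) = 0$ for a sequence converging to $x_\ast$ gives $v_1(x_\ast) = 0$ by continuity. Likewise $u_1(x_\ast) = 0$. Hence $u_1(x_\ast) = v_1(x_\ast) = 0$, which by uniqueness of Carath\'eodory solutions (\cite{er}) forces $u_1 \equiv 0$, contradicting nontriviality. (Alternatively, and more cleanly: $v_1(x_k)$ is bounded away from zero by the Wronskian bound, yet $v_1(\xi_k) = 0$ with $\xi_k$ interlacing the $x_k$ and all converging to $x_\ast$, directly contradicting continuity of $v_1$.)

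I would present the Wronskian version as the main line since it avoids any appeal to a full finite-interval SSP and uses only Lemma~\ref{lem0} plus absolute continuity of $v_1$. The one point needing care — and the main obstacle — is the reduction to an interval $J$ on which $p$ has a fixed sign: this is exactly where piecewise continuity is used, to guarantee that turning points are isolated and that $x_\ast$ has a one-sided neighborhood on which $p$ is continuous and single-signed. If $x_\ast$ happens to be a turning point, the accumulating zeros must (after passing to a subsequence) lie on one side of it, and that side is a suitable $J$; if $x_\ast$ is a breakpoint of the piecewise-continuous structure but not a turning point, the same one-sided reduction applies. The rest is the routine Sturmian estimate above, so the writeup should be short.
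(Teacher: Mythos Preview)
Your argument is correct, but it takes a genuinely different route from the paper's. The paper's proof is much shorter: from $u(x_n)=u(x_0)=0$ it writes the difference quotient as the average $\frac{1}{x_n-x_0}\int_{x_0}^{x_n} v/p$, which vanishes for every $n$, and then invokes the Lebesgue differentiation theorem (Titchmarsh, \S 11.6) to conclude that the limit $v(x_0)/p(x_0)$ is zero, hence $v(x_0)=0$; uniqueness then forces the trivial solution. There is no reduction to a one-signed interval, no Wronskian, and no interlacing of zeros of $v$.

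Your approach trades that single measure-theoretic step for a more Sturmian mechanism: fix the sign of $p$ on a small $J$, use $\int_{x_k}^{x_{k+1}} v_1/p = 0$ to produce a zero $\xi_k$ of $v_1$ between consecutive zeros of $u_1$, and pass to the limit. What this buys you is transparency about where the hypothesis of piecewise continuity actually enters (isolating a one-sided neighbourhood with $p$ of fixed sign), and it sidesteps the question of whether the particular accumulation point $x_\ast$ is a Lebesgue point of $v/p$---a point the paper's proof passes over silently. The cost is a somewhat longer writeup and the need to introduce an auxiliary solution $u_2$ for the Wronskian bound, which in fact your main line does not really need: once you have $\xi_k\to x_\ast$ with $v_1(\xi_k)=0$, continuity of $v_1$ and $u_1$ plus uniqueness already gives the contradiction. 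You could safely drop the Wronskian paragraph and present only steps~4--6.
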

\begin{proof}(Lemma~\ref{lem1x}) Assume the contrary. Thus there is a sequence of points $x_n  \neq x_0$ such that $u(x_n)=0$ and a limit point $x_0$ such that $x_n \to x_0$. Since $u$ is continuous it follows that $u(x_0)=0$. On the other hand, the first of \eqref{eq06} implies that
$$\frac{1}{x_n-x_0}\int_{x_n}^{x_0} \frac{v(t)}{p(t)}\, dt = \frac{u(x_0) - u(x_n) }{x_n - x_0} = 0,$$
which, in turn implies that, for all $n$,
$$\frac{1}{h_n} \int_{h_n}^{h_n + x_0} \frac{v(t)}{p(t)}\, dt = 0.$$
However, from [\cite{ect}, Section 11.6]  we know that $$\lim_{h_n\to 0} \frac{1}{h_n} \int_{h_n}^{h_n + x_0} \frac{v(t)}{p(t)}\, dt = \frac{v(x_0)}{p(x_0)}.$$
So, $v(x_0)=0$ is necessary and this, when combined with $u(x_0)=0$ implies that the solution of \eqref{eq06} under consideration is the trivial solution, a contradiction.
\end{proof}

The next lemma is also fundamental yet its statement and proof rarely appears in texts on the subject.  
\begin{lemma}\label{lem1}
If $p(x)$ is a.e. of one sign on $(a,b)$ and for some $x_0\in (a,b)$ we have $u(x_0)=0$ then $u(x)$ must change sign around $x_0$.
\end{lemma}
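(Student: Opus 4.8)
The plan is to show that at a zero $x_0$ of $u$ the companion function $v$ cannot also vanish, to conclude that $v$ keeps one sign on a small neighborhood of $x_0$, and then to read off the sign of $u$ on either side of $x_0$ directly from the integrated form of the first equation in \eqref{eq06}. Since by hypothesis $x_0$ lies in an interval on which $p$ does not change sign, there is no turning-point obstruction to worry about, and the argument is essentially the one used repeatedly in the proof of Theorem~\ref{th0}, isolated here as a standalone fact.

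First I would note that, by the uniqueness of Carath\'eodory solutions guaranteed under \eqref{eq00}, a solution of \eqref{eq06} with $u(x_0)=v(x_0)=0$ must be the trivial solution; since we work only with non-trivial solutions, $u(x_0)=0$ forces $v(x_0)\neq 0$. Replacing $(u,v)$ by $(-u,-v)$ if necessary we may assume $v(x_0)>0$; by hypothesis $p$ is a.e. of one sign near $x_0$, and we treat the case $p>0$ a.e.\ near $x_0$, the case $p<0$ being identical (the final signs of $u$ on the two sides merely swap). Because $v^\prime=qu$ with $q\in L(I)$ and $u$ is continuous, $v$ is absolutely continuous, hence continuous, so there is a $\delta>0$ with $v(x)>0$ for all $x\in(x_0-\delta,x_0+\delta)$, and we shrink $\delta$ so that $p>0$ a.e.\ on this interval as well.

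Next I would integrate the first equation of \eqref{eq06} from $x_0$: for $x\in(x_0,x_0+\delta)$,
$$u(x)=u(x_0)+\int_{x_0}^{x}\frac{v(s)}{p(s)}\,ds=\int_{x_0}^{x}\frac{v(s)}{p(s)}\,ds,$$
where the integrand is $\ge 0$ a.e.\ and strictly positive on a set of positive measure, so $u(x)>0$. The identical computation on $(x_0-\delta,x_0)$ gives $u(x)=-\int_{x}^{x_0}v(s)/p(s)\,ds<0$. Hence $u$ is negative immediately to the left of $x_0$ and positive immediately to its right, i.e.\ $u$ changes sign at $x_0$, which is the assertion.

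The only point needing care is the claim that $\int_{x_0}^{x}v/p$ is strictly positive rather than merely nonnegative — equivalently, that $1/p$ is not a.e.\ zero on a one-sided neighborhood of $x_0$. This is exactly why the lemma is stated under the running assumption that $1/p$ does not vanish a.e., and it is automatic in the piecewise-continuous regime of this note, where $p$ is finite and nonzero off a finite set of points, so that $v/p$ has constant (nonzero) sign on each side of $x_0$ and the conclusion is immediate.
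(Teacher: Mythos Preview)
Your argument is correct, and it takes a more direct route than the paper's. The paper argues by contradiction: it assumes $u$ does \emph{not} change sign (so $u\ge 0$ on a neighborhood of $x_0$, making $x_0$ a local minimum), then extracts sequences $x_n\to x_0^-$ with $u'(x_n)\le 0$ and $x_n^*\to x_0^+$ with $u'(x_n^*)\ge 0$, multiplies by $p>0$ to get $v(x_n)\le 0$ and $v(x_n^*)\ge 0$, and passes to the limit to force $v(x_0)=0$, contradicting uniqueness. You instead invoke uniqueness at the outset to get $v(x_0)\neq 0$, then integrate $u'=v/p$ on either side of $x_0$ to read off the sign change directly --- exactly the mechanism already used inside the proof of Theorem~\ref{th0}, now isolated as a standalone lemma. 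Both arguments rest on the same pivot (the pair $(u,v)$ cannot vanish simultaneously), but yours avoids the local-minimum and sequence machinery and is closer in spirit to the rest of the paper. Your final paragraph is apposite: both proofs need that $1/p$ is not a.e.\ zero on a one-sided neighborhood of $x_0$ (otherwise $u$ would be identically zero on an interval and the statement, as phrased, would fail), and this is automatic under the piecewise-continuity assumption governing this section.
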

\begin{proof}(Lemma~\ref{lem1})
By the fundamental existence and uniqueness theorem for Carath\'{e}dory solutions of \eqref{eq06} (see \cite{er}) we know that $v$ exists and is continuous in $(a,b)$. Without loss of generality we can assume that $p(x)>0$ a.e. or else we can change $p, q$ to $-p, -q$. We can also assume, without loss of generality, that $u(x) >0$ in a left-neighborhood of $x=x_0$ (or else replace $u$ by $-u$ in what follows). Now assume, on the contrary, that $u(x)$ bounces off the $x$-axis, that is $u(x) \geq 0$, in some neighborhood of $x_0$. Then $x=x_0$ is a local minimum of $u$. For $\delta >0$ sufficiently small and for every $x\neq x_0$, in $(x_0-\delta, x_0+\delta)$, we must have  $u(x) > 0$. Now, in the latter interval, since $u$ is absolutely continuous, there is a sequence $x_n\to x_0^-$ on which $u^{\prime}(x_n)$ exists and $u^{\prime}(x_n) \leq 0$. Thus, $v(x_n) \leq 0$. Since $v$ is necessarily continuous it follows that $v(x_0)\leq 0$. An analogous argument shows that there is a sequence $x_n^*\to x_0^+$ on which $u^{\prime}(x_n^*) \geq 0$. Thus, $v(x_n^*) \geq 0$. This leads to $v(x_0)\geq 0$. Combining these we obtain that $v(x_0)=0$. Since $u(x_0)=0$ as well, it now follows that $(u(x), v(x)) \equiv 0$ by the existence and uniqueness of Carath\'{e}dory solutions, which is a contradiction. 
\end{proof}

In other words, $u(x)$ can never vanish and stay of one sign around that zero. Thus, in the {\it Sturmian case}, in order for SSP to hold it is necessary that a solution changes it sign around any of its zeros. Thus, if a solution $u$ `` bounces" off the $x$-axis, then SSP must fail.

We now show that the case of finitely many turning points may be proved using the technique in the one turning point case of Theorem~\ref{th0} above, although we require that $p$ be piecewise continuous. 
\begin{theorem}
\label{th00}
Let $p$ be piecewise continuous and have $n \geq 2$ turning points on $(a,b)$. Let $u_1$ be a solution of \eqref{eq06i} such that $u_1(a)=u_1(b)=0$, $u_1(x) \neq 0$ in $(a,b)$. Then SSP fails on $[a,b]$.
\end{theorem}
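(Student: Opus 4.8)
The plan is to reduce the $n$-turning-point case to repeated application of the one-turning-point machinery already developed in the proof of Theorem~\ref{th0}. Let the turning points of $p$ in $(a,b)$ be $c_1 < c_2 < \dots < c_n$; since $p$ is piecewise continuous these are finite in number and, by the definition of turning point, separated from one another. Let $u_2$ be any solution of \eqref{eq06i} linearly independent of $u_1$; we may normalize so that $u_2(a)v_1(a) - u_1(a)v_2(a) = 1$ via Lemma~\ref{lem0}. Exactly as in Theorem~\ref{th0}, relation \eqref{eq0001} forces $u_2(a) \neq 0$ and $u_2(b) \neq 0$, and we may assume $u_2(a) > 0$. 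The goal is to show $u_2$ has \emph{some} number of zeros in $(a,b)$ that is incompatible with SSP — either zero zeros, or at least two — but never exactly one with a sign change, since SSP would require precisely one simple zero of $u_2$ strictly between the consecutive zeros $a,b$ of $u_1$.

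First I would record the ``Sturmian'' facts that hold on each turning-point-free subinterval. Partition $[a,b]$ as $a = c_0 < c_1 < \dots < c_n < c_{n+1} = b$. On each open interval $(c_k, c_{k+1})$ the function $p$ is a.e.\ of one sign, so by classical Sturm theory (SSP holds there) any solution of \eqref{eq06i} has at most one zero in $(c_k,c_{k+1})$, and by Lemma~\ref{lem1} that zero, if it exists, is a genuine sign change — no bouncing is possible except possibly \emph{at} a turning point $c_k$ itself, where the argument of Theorem~\ref{th0} shows a bounce is permitted because $p$ changes sign on every neighborhood of $c_k$. So the zero set of $u_2$ in $(a,b)$ consists of at most one point from each slab $(c_k,c_{k+1})$ together with possibly some of the turning points themselves; in particular $u_2$ has finitely many zeros and at each non-turning-point zero it changes sign.

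The heart of the argument is a sign-propagation / counting bookkeeping. I would track the sign of $u_2$ as $x$ increases from $a$ to $b$, using at each turning point $c_k$ the same local analysis as in the $x_0=c$ case of Theorem~\ref{th0}: writing $u_2(x) = u_2(c_k) + \int_{c_k}^x v_2/p$, together with \eqref{eq0001} to control the sign of $v_2$ at any zero of $u_2$. The key dichotomy at each slab is: between two consecutive turning points $u_2$ either keeps its sign or flips it exactly once (a transversal zero), and at a turning point it may either pass through transversally, touch-and-return (a bounce, only if $u_2(c_k)=0$), or be nonzero. Now suppose, for contradiction, that $u_2$ has exactly one zero in $(a,b)$ and it is a sign change — the only SSP-consistent outcome. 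Then $u_2(a)$ and $u_2(b)$ have opposite signs. I would show this cannot coexist with $u_1 > 0$ on $(a,b)$: the endpoint-sign argument from Theorem~\ref{th0} (using \eqref{eq0001} at $b$ and a continuity/integration argument with the sign of $p$ near $b$) pins down $\sgn u_2(b)$ in terms of $\sgn u_1'$ near $b$, hence in terms of $\sgn p$ on $(c_n,b)$; similarly $\sgn u_2(a)$ is pinned down by $\sgn p$ on $(a,c_1)$. Depending on the parity of $n$ and the orientation of the first turning point, these forced signs either agree (giving an even number of transversal crossings, impossible if there is exactly one) or the single crossing is forced to sit at a turning point with a bounce (hence not a sign change) — in every configuration the ``exactly one simple interior zero'' scenario is excluded. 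Therefore $u_2$ has either no zero or at least two zeros in $(a,b)$, and SSP fails.

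The main obstacle I anticipate is the case analysis by parity and by the sign pattern of $p$ across the turning points: one has to verify that for \emph{every} choice of linearly independent $u_2$ the number of interior zeros is never exactly one-with-sign-change, and the cleanest way to do this is to extract from the Theorem~\ref{th0} proof a single reusable lemma of the form ``across a turning point $c_k$, if $u_2$ is nonzero just to the left then either it is nonzero and of the same sign just to the right, or it has a zero in the closure of the adjacent slab,'' and then to chain these statements. A secondary subtlety is making sure Lemma~\ref{lem1x} (no accumulation of zeros) and Lemma~\ref{lem1} (no bouncing in the Sturmian regime) are invoked to guarantee the zero count is finite and that bounces are confined to turning points; with piecewise continuity of $p$ this is exactly what those lemmas deliver. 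Once the reusable local lemma is in hand, the global conclusion is a short parity argument.
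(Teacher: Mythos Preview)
Your strategy and the paper's diverge at the very first step, and the divergence matters. To show SSP fails on $[a,b]$ you only need to exhibit \emph{one} linearly independent solution $u_2$ that does not have exactly one zero in $(a,b)$. The paper does precisely this: it \emph{chooses} $u_2$ by fixing a point $x_0\in(c_{n-1},c_n)$ strictly between the last two turning points and taking the solution with $u_2(x_0)=0$, $v_2(x_0)=1$. Since $x_0\neq c_n$ by construction, the local one-turning-point argument from Theorem~\ref{th0}, applied on $[c_{n-1},b]$, forces a second zero of $u_2$ in $(c_n,b)$. Two zeros in $(a,b)$ already violates SSP; nothing about the rest of $[a,c_{n-1}]$ is needed.

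You instead take an \emph{arbitrary} $u_2$ and try to show it can never have exactly one sign-changing zero. Your endpoint computation is correct: from \eqref{eq0001} one gets $\sgn u_2(a)=\sgn p$ on $(a,c_1)$ and $\sgn u_2(b)=-\sgn p$ on $(c_n,b)$, and since $p$ flips sign $n$ times this yields $\sgn u_2(b)=(-1)^{n+1}\sgn u_2(a)$. For odd $n$ this indeed forces an even number of sign changes and your parity argument closes. But for even $n\ge 2$ the endpoint signs are opposite, so an odd number of sign changes is forced --- and exactly one is perfectly consistent with that parity. Your proposed escape (``the single crossing is forced to sit at a turning point with a bounce'') is not an argument: a bounce is, by definition, \emph{not} a sign change, so it cannot account for the required odd parity, and nothing you have said excludes a single transversal zero lying in one of the slabs $(c_k,c_{k+1})$. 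In short, the even-$n$ case is a genuine gap, and since $n=2$ is the first case of the theorem, the proof as written does not establish the result. The fix is exactly the paper's idea: stop trying to control every $u_2$ and instead manufacture one bad $u_2$ by prescribing a zero inside the last slab.
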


\begin{proof}(Theorem~\ref{th00}).
We label the turning points $a<c_1< c_2 < \ldots < c_n<b$.  Fix $x_0 \in (c_{n-1}, c_n)$. Then, by the fundamental existence and uniqueness theorem \cite{er},  there is a non-trivial solution $u_2$ of \eqref{eq06} such that $u_2(x_0)=0$ and $v_2(x_0)=1$. Since $p$ is piecewise continuous, we may assume without loss of generality, that $p(x)>0$ in $(c_{n-1}, c_n)$ (or else replace $u_2$ by $-u_2$ in the ensuing argument with obvious modifications). We can now apply the proof of Theorem~\ref{th0} on $[c_{n-1}, b]$, note that $x_0\neq c_n$, by construction, and then deduce that $u_2$ cannot bounce at $c_n$ so that therefore $u_2(x)$ must be zero again in $(c_n, b)$. This shows that SSP must fail in $(c_{n-1}, b)$ and so, a fortiori, in $[a,b]$.
\end{proof}

\begin{center} ACKNOWLEDGMENT
\end{center}
The authors should like to thank Mr. Seyifunmi Ayeni, of Carleton University, who provided the basis for some of the examples considered here during a summer research project at Carleton University under the supervision of the second author. We must also thank the referee for a careful reading of the manuscript.


\begin{center} {\bf References} \end{center}

\end{document}